\DeclareMathAlphabet{\mathpzc}{OT1}{pzc}{m}{it}
\newtheorem{thm}{Theorem}[section]
\newtheorem{prop}[thm]{Proposition}
\newtheorem{cor}[thm]{Corollary}
\begin{document}

\renewcommand{\thefootnote}{\arabic{footnote}}
 	
\title{Real elliptic curves and cevian geometry}

\author{\renewcommand{\thefootnote}{\arabic{footnote}}
Igor Minevich and Patrick Morton}
\maketitle

\begin{section}{Introduction.}

In this paper we will investigate the connection between elliptic curves $E$ defined over the real numbers $\mathbb{R}$ and the cevian geometry which we have worked out in the series of papers \cite{mm1}--\cite{mms}. \medskip

We will rederive some of the facts relating to barycentric coordinates that we discussed in the unpublished papers \cite{mm0} and \cite{mo}.  \medskip

First, some notation.  We let $ABC$ be an ordinary triangle in the extended plane, and $P = (x,y,z)$ be a point not on the sides of $ABC$ or its anticomplementary triangle $K^{-1}(ABC)$, whose homogeneous barycentric coordinates with respect to $ABC$ are $(x,y,z)$.  We note that the isotomic map $\iota$ for $ABC$ has the representation
$$P'=\iota(P) = \iota(x,y,z) = \left(\frac{1}{x}, \frac{1}{y}, \frac{1}{z}\right) = (yz, xz, xy), \ \ xyz \neq 0,$$
and the complement mapping $K$ and its inverse $K^{-1}$ with respect to $ABC$ have the matrix representations
$$K=
\left(
\begin{array}{ccc}
 0 & 1  & 1  \\
 1 & 0  & 1  \\
 1 & 1  &  0 
\end{array}
\right), \ \ \
K^{-1}=\left(
\begin{array}{ccc}
 -1 & 1  & 1  \\
 1 & -1  & 1  \\
 1 & 1  & -1 
\end{array}
\right).
$$
It follows easily that the equations of the sides of the anticomplementary triangle $K^{-1}(ABC)$ are given by
$$K^{-1}(BC): y+z=0, \quad K^{-1}(AC): x+z=0, \quad K^{-1}(AB): x+y=0.$$

The isotomcomplement $Q$ of $P$ with respect to $ABC$ is the point $Q=K(\iota(P))=K(P')$, whose barycentric coordinates, in terms of $x,y,z$, are
$$Q = K(yz,xz,xy)^t = (x(y+z), y(x+z), z(x+y))=(x',y',z').$$ 
The corresponding point $Q'=K(\iota(P')) = K(P)$ for $P'$ has coordinates
$$Q'=(y+z,x+z,x+y).$$
Since the equations of the sides $BC, CA, AB$ of $ABC$ are, respectively, $x=0, y=0$, and $z=0$, the points
$$D = (0,y,z), \ \ E = (x,0,z), \ \ F = (x,y,0)$$ 
are the traces of $P$ on the respective sides, and the unique affine mapping $T_P$ taking $ABC$ to $DEF$ is given in terms of barycentric coordinates by the matrix
\begin{equation}
T_P=
\left(
\begin{array}{ccc}
  0 & x'(x+y)  & x'(x+z)  \\
  y'(x+y) & 0  & y'(y+z)  \\
  z'(x+z) & z'(y+z)  & 0  
\end{array}
\right).
\end{equation}
The traces of the point $P'$ on the sides of $ABC$ are
$$D_3 = (0,z,y), \ \ E_3 = (z,0,x), \ \ F_3 = (y,x,0),$$
and the corresponding affine map $T_{P'}:ABC \rightarrow D_3E_3F_3$ is given by the matrix
\begin{equation}
T_{P'} =
\left(
\begin{array}{ccc}
 0 & z'(y+z)  & y'(y+z)  \\
  z'(x+z) & 0  & x'(x+z)  \\
  y'(x+y) & x'(x+y)  & 0  
\end{array}
\right).
\end{equation}
In \cite{mm1} we showed that $Q$ is a fixed point of $T_P$ and $Q'$ is a fixed point of $T_{P'}$; further, these are the only ordinary fixed points of these maps, when $P$ and $P'$ are ordinary points.  (See \cite{mm1}, Theorems 3.2 and 3.12.) \medskip

In \cite{mm1} and \cite{mm2} we studied the affine mapping
\begin{equation}
\mathcal{S}=T_P \circ T_{P'} =
\left(
\begin{array}{ccc}
  x(y'+z') & xx' & xx' \\
  yy' & y(x'+z')  & yy'  \\
  zz' & zz'  & z(x'+y')  
\end{array}
\right),
\end{equation}
which is a homothety or translation.  In \cite{mm2} our main focus was on the affine map
\begin{equation}
\lambda=T_{P'} \circ T_P^{-1} =
\left(
\begin{array}{ccc}
  yz(y+z) & xz(y-z) & xy(z-y) \\
  yz(x-z) & xz(x+z)  & xy(z-x) \\
  yz(x-y) & xz(y-x)  & xy(x+y)  
\end{array}
\right).
\end{equation}
Then in \cite{mm3} we made use of the map
\begin{equation}
\textsf{M}=T_{P} \circ K^{-1} \circ T_{P'} =
\left(
\begin{array}{ccc}
  x(y-z)^2 & x(y+z)^2 & x(y+z)^2 \\
  y(x+z)^2 & y(x-z)^2  & y(x+z)^2 \\
  z(x+y)^2 & z(x+y)^2  & z(x-y)^2  
\end{array}
\right),
\end{equation}
which is also a homothety or translation.  In \cite{mmv} we studied the points $P$ for which $\textsf{M}$ is a translation, and in \cite{mms} we studied the points $P$ for which $\textsf{M}$ is a half-turn.

\begin{prop}
\label{prop:fixed}
The maps $\mathcal{S}, \lambda$, and $\textsf{M}$ have the respective fixed points
\begin{align}
X &= (xx', yy', zz') = (x^2(y+z),y^2(x+z),z^2(x+y)) = P \cdot Q,\\
Z &= (x(y-z)^2, y(z-x)^2, z(x-y)^2),\\
S &= (x(y+z)^2, y(x+z)^2, z(x+y)^2) = Q \cdot Q'.
\end{align}
\end{prop}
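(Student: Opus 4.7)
My plan is to verify each of the three eigenvalue identities directly from the matrix forms already displayed, using cyclic symmetry to cut the work by a factor of three. Under the substitution $x\to y\to z\to x$ (with the induced cyclic shift $x'\to y'\to z'\to x'$) together with a matching cyclic permutation of rows and columns, each of the matrices $\mathcal{S}$, $\lambda$, $\textsf{M}$ is invariant, and each candidate point $X$, $Z$, $S$ is sent to a cyclic permutation of itself. Hence, once I verify that the first coordinate of $\mathcal{S}X$ (respectively $\lambda Z$, $\textsf{M}S$) equals the first coordinate of the candidate fixed point times a polynomial in $x,y,z$ that is symmetric under the cyclic substitution, the remaining two coordinate identities will follow automatically with the same scalar eigenvalue.

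For $\mathcal{S}$, applying row one to $X=(xx',yy',zz')^t$ gives $xx'[x(y'+z')+yy'+zz']=xx'[y'(x+y)+z'(x+z)]$, and substituting $y'=y(x+z)$, $z'=z(x+y)$ collapses the bracket to $(x+y)(y+z)(z+x)$, which is manifestly symmetric and will serve as the common eigenvalue. For $\lambda$, row one applied to $Z$ gives $xyz(y-z)\bigl[(y+z)(y-z)+(z-x)^2-(x-y)^2\bigr]$; factoring the difference of squares as $(z-x)^2-(x-y)^2=(z-y)(y+z-2x)$ and combining with $(y+z)(y-z)$ collapses the inner bracket to $2x(y-z)$, so the first coordinate becomes $2xyz\cdot x(y-z)^2 = 2xyz\cdot Z_1$, exhibiting the symmetric eigenvalue $2xyz$.

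For $\textsf{M}$, the small algebraic observation that makes the calculation go through is that the ``diagonal'' summand $x(y-z)^2\cdot x(y+z)^2 = x\cdot x(y^2-z^2)^2$ already carries the factor $(y+z)^2$ shared by the other two summands in the first coordinate of $\textsf{M}S$; pulling out $S_1=x(y+z)^2$ leaves $x(y-z)^2+y(x+z)^2+z(x+y)^2$, which I will expand to show equals the symmetric polynomial $(x+y+z)(xy+yz+zx)-xyz$, thereby completing the verification and supplying the common eigenvalue. I do not foresee any serious obstacle; the only steps requiring genuine attention are spotting the two algebraic factorizations just described, after which the cyclic symmetry handles the remaining coordinates at no extra cost.
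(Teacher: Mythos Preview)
Your proposal is correct and follows essentially the same approach as the paper: direct verification that each candidate vector is an eigenvector of the corresponding matrix, with eigenvalues $(x+y)(y+z)(z+x)$, $2xyz$, and $(x+y)(y+z)(z+x)$ respectively (your expression $(x+y+z)(xy+yz+zx)-xyz$ for the last one is the same polynomial, as the paper itself notes). Your explicit appeal to cyclic symmetry to reduce to a single coordinate is a clean organizational device that the paper leaves implicit, but the underlying computation is identical.
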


\begin{proof}
This is a straightforward calculation.  For example,
\begin{align*}
\mathcal{S}(X) &= \left(
\begin{array}{ccc}
  x(y'+z') & xx' & xx' \\
  yy' & y(x'+z')  & yy'  \\
  zz' & zz'  & z(x'+y')  
\end{array}
\right)
(xx', yy', zz')^t\\
&= (x+y)(x+z)(y+z) (xx',yy',zz')^t.
\end{align*}
Since $P$ is not on any of the sides of $K^{-1}(ABC)$, the quantity $(x+y)(x+z)(y+z)$ is nonzero, so $\mathcal{S}(X)=X$.  Simlarly,
\begin{align*}
\lambda(Z)&= 
\left(
\begin{array}{ccc}
  yz(y+z) & xz(y-z) & xy(z-y) \\
  yz(x-z) & xz(x+z)  & xy(z-x) \\
  yz(x-y) & xz(y-x)  & xy(x+y)  
\end{array}
\right)(x(y-z)^2,y(z-x)^2,z(x-y)^2)^t \\
&= (2xyz)(x(y-z)^2,y(z-x)^2,z(x-y)^2)^t,
\end{align*}
where $xyz \neq 0$, since $P$ does not lie on the sides of $ABC$; and
\begin{align*}
\textsf{M}(S) &= \left(
\begin{array}{ccc}
  x(y-z)^2 & x(y+z)^2 & x(y+z)^2 \\
  y(x+z)^2 & y(x-z)^2  & y(x+z)^2 \\
  z(x+y)^2 & z(x+y)^2  & z(x-y)^2  
\end{array}
\right)(x(y+z)^2,y(x+z)^2,z(x+y)^2)^t\\
&=\rho(x(y+z)^2,y(x+z)^2,z(x+y)^2)^t,
\end{align*}
where
$$\rho=x(y^2+z^2)+y(x^2+z^2)+z(x^2+y^2)+2xyz=(x+y)(x+z)(y+z).$$
This proves the proposition.
\end{proof}

The points $X$ and $S$ are the centers of the respective maps $\mathcal{S}$ and $\textsf{M}$.  In the former case, letting $Y=(a,b,c)$, we have
\begin{align*}
\mathcal{S}(Y) &= \left(
\begin{array}{ccc}
  x(y'+z') & xx' & xx' \\
  yy' & y(x'+z')  & yy'  \\
  zz' & zz'  & z(x'+y')  
\end{array}
\right)
(a,b,c)^t\\
&= (ax(y'+z')+(b+c)xx',by(x'+z')+(a+c)yy',cz(x'+y')+(a+b)zz')^t\\
&= ((a+b+c)xx'+2axyz,(a+b+c)yy'+2bxyz,(a+b+c)zz'+2cxyz)^t\\
&=(a+b+c)X+2xyzY.
\end{align*}
It follows that $X$ is collinear with $Y$ and $\mathcal{S}(Y)$, for any ordinary point $Y=(a,b,c)$ (because $a+b+c \neq 0$).  This proves the claim that $X$ is the center of $\mathcal{S}$.  The computation
\begin{align*}
\textsf{M}(Y) &= \left(
\begin{array}{ccc}
  x(y-z)^2 & x(y+z)^2 & x(y+z)^2 \\
  y(x+z)^2 & y(x-z)^2  & y(x+z)^2 \\
  z(x+y)^2 & z(x+y)^2  & z(x-y)^2  
\end{array}
\right)(a,b,c)^t\\
&= (a+b+c)S-4xyzY
\end{align*}
shows the same for $S$ and the map $\textsf{M}$.  This verifies that the points $X$ and $S$ are the same as the points (with the same names) which are discussed in \cite{mm1} and \cite{mm3}.  Note that these relations also show that $\mathcal{S}$ and $\textsf{M}$ fix all the points on the line at infinity.  \medskip

Next we define the point
\begin{equation}
\label{eqn:barV}
V=(x(y^2+yz+z^2),y(x^2+xz+z^2),z(x^2+xy+y^2))
\end{equation}
and note that $V$ can also be given by
\begin{equation}
\label{eqn:V}
V=PQ \cdot P'Q', \ \ \ \ (P, P' \ \textrm{ordinary}, P \ \textrm{not on a median}).
\end{equation}
To see this, we note first that $V$ is collinear with $P$ and $Q$, which is immediate from the equation
$$V=-(xy+xz+yz)P+(x+y+z)Q, \ \ \ \ (P, P' \ \textrm{ordinary}).$$
We convert to absolute barycentric coordinates by dividing each point in this equation by the sum of its coordinates.  Since the sum of the coordinates of $V$ is
\begin{align*}
x(y^2+yz+z^2) &+ y(x^2+xz+z^2)+z(x^2+xy+y^2)\\
 &= x^2(y+z)+y^2(x+z)+z^2(x+y)+3xyz\\
&= (x+y+z)(xy+xz+yz),
\end{align*}
then denoting the last expression by $F(0)$ (see the proof of Proposition \ref{prop:ZGV} below) we have the relation
$$\frac{1}{F(0)}V=-\frac{1}{x+y+z}P+\frac{2}{2(xy+xz+yz)}Q.$$
It follows from this that $Q$ is the midpoint of the segment $PV$, when $P$ and $V$ are ordinary.  Replacing $P$ by $P'$ and $Q$ by $Q'$ is effected by the map $x \rightarrow \frac{1}{x}, y \rightarrow \frac{1}{y}, z \rightarrow \frac{1}{z}$, and on multiplying through by $x^2y^2z^2$, we obtain the equation
$$V=-(x+y+z)P'+(xy+xz+yz)Q'.$$
This shows as above that $Q'$ is the midpoint of $PV'$.
This proves that (\ref{eqn:V}) holds for the point defined by (\ref{eqn:barV}). \medskip

\begin{prop}
\label{prop:ZGV}
The point $Z$ is collinear with $G=(1,1,1)$ and $V=(x(y^2+yz+z^2),y(x^2+xz+z^2),z(x^2+xy+y^2))$.  We have the relation
\begin{equation}
Z=(-3xyz)G+V.
\label{eqn:Z}
\end{equation}
If $P, P',$ and $Z$ are ordinary points, then we have the signed ratio
\begin{equation}
\frac{GZ}{ZV}=\frac{-1}{9}\frac{(x+y+z)(xy+yz+xz)}{xyz}.
\label{eqn:GZV}
\end{equation}
\end{prop}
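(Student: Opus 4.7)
The plan is to reduce both parts of the proposition to a single componentwise algebraic identity in homogeneous coordinates, and then to renormalize to absolute barycentric coordinates to extract the signed ratio, exactly parallel to the argument that was just carried out for $V$ in the introduction.

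First I would verify (\ref{eqn:Z}) directly at the level of coordinates. The crucial algebraic identity is $y^2+yz+z^2-3yz=(y-z)^2$; multiplying by $x$ shows that the first coordinate of $-3xyz\cdot G+V$ equals $x(y-z)^2$, which is the first coordinate of $Z$. Cyclically permuting $x\to y\to z\to x$ sends $x(y-z)^2$ to $y(z-x)^2$ and then to $z(x-y)^2$, and does the same for the coordinates of $V$, so the other two coordinates match as well. This proves (\ref{eqn:Z}), and collinearity of $G$, $Z$, $V$ is an immediate consequence because (\ref{eqn:Z}) exhibits $Z$ as a linear combination of $G$ and $V$.

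For the signed ratio, I would convert (\ref{eqn:Z}) to absolute barycentric coordinates by dividing each point by the sum of its coordinates, as was done just above for $V$. The sums of coordinates of $G$ and $V$ are $3$ and $F(0)=(x+y+z)(xy+yz+xz)$, respectively. A short expansion of $\sum x(y-z)^2$ followed by the factorization $\sum(x^2y+xy^2)=(x+y+z)(xy+yz+xz)-3xyz$ shows that the sum of coordinates of $Z$ is $F(0)-9xyz$. Dividing (\ref{eqn:Z}) through by $F(0)-9xyz$ therefore presents the absolute-barycentric point $Z$ as an affine combination $(1-t)G+tV$ with $t=F(0)/(F(0)-9xyz)$ and $1-t=-9xyz/(F(0)-9xyz)$. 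Since the signed ratio $GZ/ZV$ on a line equals $t/(1-t)$, the common denominator cancels and leaves $-F(0)/(9xyz)$, which is (\ref{eqn:GZV}).

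No step is a real obstacle; the argument is a bookkeeping exercise. The only points requiring care are tracking the three different sum-of-coordinate factors when passing between homogeneous and absolute barycentric coordinates, and checking that the hypotheses that $P$, $P'$, and $Z$ be ordinary (so $x+y+z\ne 0$, $xyz\ne 0$, and $F(0)-9xyz\ne 0$) guarantee that none of the denominators that appear vanish.
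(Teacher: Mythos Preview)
Your argument is correct and follows essentially the same route as the paper: the identity $y^2+yz+z^2-3yz=(y-z)^2$ (plus cyclic permutation) establishes (\ref{eqn:Z}), and then dividing through by the sums of coordinates --- $3$ for $G$, $F(0)=(x+y+z)(xy+yz+xz)$ for $V$, and $F(0)-9xyz$ for $Z$ --- yields the affine combination from which the ratio $-F(0)/(9xyz)$ is read off. The only cosmetic difference is that the paper packages the coordinate sums using the auxiliary polynomial $F(a)=x^2(y+z)+y^2(x+z)+z^2(x+y)+(a+3)xyz$, so that the sum for $Z$ is written as $F(-9)$ rather than $F(0)-9xyz$; your version is the same computation without that shorthand. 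One tiny bookkeeping remark: ``$P'$ ordinary'' unpacks to $xy+yz+xz\neq 0$ (not $xyz\neq 0$), which together with $x+y+z\neq 0$ is exactly what makes $F(0)\neq 0$ so that $V$ is ordinary and the normalization goes through.
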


\begin{proof}
Define
\begin{equation}
F(a)=x^2(y+z)+y^2(x+z)+z^2(x+y)+(a+3)xyz.
\label{eqn:F}
\end{equation}
Equation (\ref{eqn:Z}) follows immediately from the identity
$$-3xyz+x(y^2+yz+z^2)=x(y-z)^2$$
by cyclically permuting the variables $(x \rightarrow y \rightarrow z \rightarrow x)$.  In order to prove (\ref{eqn:GZV}), we convert to absolute barycentric coordinates by dividing the coordinates of $Z$ by their sum, which is
$$x(y-z)^2+y(x-z)^2+z(x-y)^2=x^2(y+z)+y^2(x+z)+z^2(x+y)-6xyz=F(-9).$$
As above, the sum of the coordinates of $V$ is
$$x^2(y+z)+y^2(x+z)+z^2(x+y)+3xyz = F(0)=(x+y+z)(xy+xz+yz).$$
This shows that $V$ is ordinary whenever $P$ and $P'$ are ordinary.  Putting this into (\ref{eqn:Z}) gives
$$\frac{1}{F(-9)}Z=\frac{-9xyz}{F(-9)}\left(\frac{1}{3},\frac{1}{3},\frac{1}{3}\right)+\frac{F(0)}{F(-9)}\left(\frac{1}{F(0)}V\right).$$
Now $F(0)-9xyz=F(-9)$, so this relation implies that the signed ratio $GZ/ZV$ is given by
$$\frac{GZ}{ZV}=\frac{F(0)/F(-9)}{(-9xyz)/F(-9)}=\frac{-1}{9}\frac{F(0)}{xyz},$$
which agrees with (\ref{eqn:GZV}).  (See \cite{buk}, p. 28.)
\end{proof}

\begin{prop}
If the points $S,V$ are ordinary, we have
$$S = (xyz)G+V \ \ \textrm{and} \ \ \frac{GS}{SV}=\frac{(x+y+z)(xy+xz+yz)}{3xyz}.$$
In particular, the cross ratio $(GV,SZ) = -3$.
\end{prop}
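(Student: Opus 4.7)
The plan is to verify the identity $S=xyz\cdot G+V$ by a direct coordinate check, convert to absolute barycentric coordinates to extract the signed ratio $GS/SV$, and then combine with the ratio $GZ/ZV$ from Proposition~\ref{prop:ZGV} to compute the cross ratio.

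First I would check the first coordinate of $xyz\cdot G+V$: it equals $xyz+x(y^2+yz+z^2)=x(y^2+2yz+z^2)=x(y+z)^2$, which is the first coordinate of $S$, and the remaining two follow by the cyclic symmetry $x\to y\to z\to x$. This identity immediately places $S$ on the line through $G$ and $V$, and together with (\ref{eqn:Z}) it shows that $G$, $V$, $S$, $Z$ are collinear, which is what makes the final cross ratio assertion meaningful.

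Next I would pass to absolute barycentric coordinates. Using the polynomial $F(a)$ from the proof of Proposition~\ref{prop:ZGV}, the coordinate sum of $S$ is
$$x(y+z)^2+y(x+z)^2+z(x+y)^2=x^2(y+z)+y^2(x+z)+z^2(x+y)+6xyz=F(3),$$
while the sums for $V$ and $G$ are $F(0)=(x+y+z)(xy+xz+yz)$ and $3$ respectively. Dividing $S=xyz\cdot G+V$ through by $F(3)$ and using the identity $F(3)=F(0)+3xyz$ expresses the absolute point $\frac{1}{F(3)}S$ as a combination of $\frac{1}{3}G$ and $\frac{1}{F(0)}V$ whose coefficients $\frac{3xyz}{F(3)}$ and $\frac{F(0)}{F(3)}$ sum to $1$. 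The signed ratio then reads off as
$$\frac{GS}{SV}=\frac{F(0)/F(3)}{3xyz/F(3)}=\frac{F(0)}{3xyz}=\frac{(x+y+z)(xy+xz+yz)}{3xyz}.$$

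Finally, since $S$ and $Z$ both lie on line $GV$, the cross ratio $(GV,SZ)$ is simply the quotient of the two signed ratios $GS/SV$ and $GZ/ZV$, so substituting the value just obtained and (\ref{eqn:GZV}) yields
$$(GV,SZ)=\frac{GS/SV}{GZ/ZV}=\frac{F(0)/(3xyz)}{-F(0)/(9xyz)}=-3.$$
There is no real obstacle here: the only nontrivial algebraic step is the expansion of the coordinate sum of $S$, which is routine, and the conceptual input needed for the cross ratio to make sense, namely collinearity of $G,V,S,Z$, is supplied by the identity $S=xyz\cdot G+V$ together with Proposition~\ref{prop:ZGV}.
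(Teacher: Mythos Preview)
Your proof is correct and follows essentially the same route as the paper's: verify $S=xyz\cdot G+V$ coordinatewise, normalize by the coordinate sums $F(3)$ and $F(0)$ to read off $GS/SV=F(0)/(3xyz)$, and then obtain the cross ratio as the quotient of $GS/SV$ by $GZ/ZV$. The only difference is that you are a bit more explicit about the collinearity of $G,V,S,Z$ and the expansion of the coordinate sum of $S$, both of which the paper leaves implicit.
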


\begin{proof}
The relation betwen $S, G$, and $V$ is immediate from (8) and (\ref{eqn:barV}).  This gives that
$$\frac{1}{F(3)}S=\frac{3xyz}{F(3)}\frac{1}{3}G+\frac{F(0)}{F(3)}\frac{1}{F(0)}V,$$
from which the formula $\frac{GS}{SV}=\frac{F(0)}{3xyz}$ follows.  Then
$$(GV,SZ)=\frac{GS}{GZ} \frac{VZ}{VS} = \frac{GS/SV}{GZ/ZV}=-3.$$ 
\end{proof}

We now determine the homothety ratio of the map $\textsf{M}$.  From \cite{mm3} we have the generalized circumcenter $O=T_{P'}^{-1} \circ K(Q)$ given by
\begin{align}
O &= \left(
\begin{array}{ccc}
 -xx' & x'y  & x'z  \\
 y'x & -yy'  & y'z  \\
 z'x & z'y  &  -zz'
\end{array}
\right)
\left(
\begin{array}{ccc}
 0 & 1  & 1  \\
 1 & 0  & 1  \\
 1 & 1  &  0 
\end{array}
\right) (x(y+z),y(x+z),z(x+y))^t\\
&= (x(y+z)^2x'',y(x+z)^2y'',z(x+y)^2z''),
\end{align}
where
\begin{equation}
x''=xy+xz+yz-x^2, \ \ y''=xy+xz+yz-y^2, \ \ z''= xy+xz+yz-z^2.
\end{equation}
We will use the fact that $\textsf{M}(O)=Q$ to determine the ratio $SQ/SO$ in the next proposition.  Note that the sum of the coordinates of $O$ is
$$x(y+z)^2x''+y(x+z)^2y''+z(x+y)^2z''=8xyz(xy+xz+yz),$$
while the sum of the coordinates of the point $S$ is
$$x(y+z)^2+y(x+z)^2+z(x+y)^2=x^2(y+z)+y^2(x+z)+z^2(x+y)+6xyz=F(3).$$

\begin{prop}
\label{prop:SOQ}
We have that
$$(x+y)(x+z)(y+z)Q=2(xy+xz+yz)S-O$$
and when $P'$ and $S$ are ordinary points, the signed ratio $SQ/SO$ is given by
$$\frac{SQ}{SO}=-\frac{QS}{SO}=\frac{-4xyz}{(x+y)(x+z)(y+z)}.$$
\end{prop}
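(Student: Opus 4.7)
The plan is to verify the vector identity componentwise and then pass to absolute barycentric coordinates to read off the signed ratio.

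\textbf{Step 1: The identity.} Let $\sigma=xy+xz+yz$, so $x''=\sigma-x^2$, $y''=\sigma-y^2$, $z''=\sigma-z^2$. I would compute the first coordinate of $2\sigma S - O$:
\[
2\sigma\cdot x(y+z)^2 - x(y+z)^2(\sigma-x^2) = x(y+z)^2(\sigma+x^2).
\]
The key algebraic observation is
\[
\sigma+x^2 = x^2+xy+xz+yz = (x+y)(x+z),
\]
so the first coordinate becomes $x(y+z)^2(x+y)(x+z)$, which matches the first coordinate of $(x+y)(x+z)(y+z)Q=(x+y)(x+z)(y+z)\bigl(x(y+z),\ldots\bigr)$. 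By the cyclic symmetry of the construction the remaining two coordinates work out identically, establishing the first formula.

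\textbf{Step 2: Coordinate sums.} To convert this projective identity into an affine relation, I would collect the sums of coordinates already computed in the excerpt: the sum for $S$ is $F(3)$, for $O$ is $8xyz\sigma$, and for $Q$ is $x(y+z)+y(x+z)+z(x+y)=2\sigma$. I would then verify the numerical identity
\[
F(3)-4xyz=(x+y)(x+z)(y+z),
\]
which follows from expanding $(x+y)(x+z)(y+z)=x^2(y+z)+y^2(x+z)+z^2(x+y)+2xyz$ and comparing with the formula $F(3)=x^2(y+z)+y^2(x+z)+z^2(x+y)+6xyz$.

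\textbf{Step 3: Signed ratio via absolute coordinates.} Writing $\bar R$ for the absolute-barycentric version of a point $R$, I would divide each term in $2\sigma S - O = (x+y)(x+z)(y+z)Q$ by its coordinate sum, noting the common factor $2\sigma$ cancels, to obtain
\[
F(3)\,\bar S - 4xyz\,\bar O = (x+y)(x+z)(y+z)\,\bar Q.
\]
Substituting $F(3)=(x+y)(x+z)(y+z)+4xyz$ and rearranging yields
\[
(x+y)(x+z)(y+z)(\bar S-\bar Q) = 4xyz\,(\bar O-\bar S),
\]
so $\vec{QS}$ and $\vec{SO}$ are parallel with $\vec{QS}=\frac{4xyz}{(x+y)(x+z)(y+z)}\vec{SO}$. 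Hence $SQ/SO=-QS/SO=-4xyz/[(x+y)(x+z)(y+z)]$, as claimed.

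\textbf{Main obstacle.} The computation is not deep; the only place to slip is bookkeeping with coordinate sums and signs when moving from projective to affine coordinates, and recognizing the factorization $\sigma+x^2=(x+y)(x+z)$ that collapses the first coordinate of $2\sigma S-O$ into the desired product. As a sanity check, one can also note that since $\textsf{M}$ is a homothety with center $S$ and $\textsf{M}(O)=Q$ (by the definition of $O$), the ratio $SQ/SO$ must equal the homothety ratio of $\textsf{M}$, so this proposition also pins that ratio down.
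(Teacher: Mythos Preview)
Your proof is correct. The difference from the paper lies in how the first identity is established. The paper invokes the homothety formula $\textsf{M}(Y)=(a+b+c)S-4xyzY$ (derived just after Proposition~\ref{prop:fixed}) together with the geometric fact $\textsf{M}(O)=Q$: plugging in the coordinate sum $8xyz(xy+xz+yz)$ for $O$ and dividing out $4xyz$ gives the identity directly, with the scalar $(x+y)(x+z)(y+z)$ emerging from the equation $F(-1)+4xyz=F(3)$. You instead verify the identity componentwise from the explicit coordinates of $O$, $S$, $Q$, the key step being the factorization $\sigma+x^2=(x+y)(x+z)$. Your route is self-contained and avoids appealing to the map $\textsf{M}$ at all, at the cost of a small algebraic computation; the paper's route is more conceptual and explains \emph{why} the identity holds (it is just the homothety relation $Q=\textsf{M}(O)$ written out), which is precisely the observation you flag as a sanity check at the end. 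The passage to absolute barycentric coordinates in your Step~3 is equivalent to the paper's, which rewrites the identity as a convex combination $\frac{F(-1)}{F(3)}\bar Q+\frac{4xyz}{F(3)}\bar O=\bar S$ and reads off the ratio from the coefficients.
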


\begin{proof}
From the computation following the proof of Proposition \ref{prop:fixed} we have that
$$\textsf{M}(O)=8xyz(xy+xz+yz)S-4xyzO,$$
from which we obtain
$$(x+y)(x+z)(y+z)Q=2(xy+xz+yz)S-O.$$
Using the fact that $F(-1)=(x+y)(x+z)(y+z)$, we obtain the relation in absolute barycentric coordinates given by
$$\frac{F(-1)}{F(3)}\frac{1}{2(xy+xz+yz)}Q+\frac{4xyz}{F(3)}\frac{1}{8xyz(xy+xz+yz)}O=\frac{1}{F(3)}S,$$
where $F(-1)+4xyz=F(3)$.  This proves the second assertion.
\end{proof}

\begin{cor}
If $P, P', Z,$ and $S$ are ordinary points, the homothety ratio of the map $\textsf{M}$ is
$$k =\frac{SQ}{SO}=\frac{4}{9\frac{GZ}{ZV}+1} .$$
\label{cor:conj}
\end{cor}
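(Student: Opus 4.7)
The plan is to combine the explicit formula for $SQ/SO$ from Proposition \ref{prop:SOQ} with the explicit formula for $GZ/ZV$ from Proposition \ref{prop:ZGV}, and then reduce the identification to a single symmetric-polynomial identity.

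First, since $\textsf{M}$ is a homothety (as stated after equation (5)) with center $S$ (proved in Proposition \ref{prop:fixed} and reaffirmed in the commentary that follows), and since $\textsf{M}(O)=Q$ (the relation used in the proof of Proposition \ref{prop:SOQ}), the homothety ratio $k$ of $\textsf{M}$ is exactly the signed ratio $SQ/SO$. Thus the first equality $k=SQ/SO$ is a direct consequence of what has already been established, and Proposition \ref{prop:SOQ} evaluates this to
$$k=\frac{SQ}{SO}=\frac{-4xyz}{(x+y)(x+z)(y+z)}.$$

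Next, I would take the expression for $GZ/ZV$ from Proposition \ref{prop:ZGV} and compute
$$9\frac{GZ}{ZV}+1 = -\frac{(x+y+z)(xy+yz+xz)}{xyz}+1 = \frac{xyz-(x+y+z)(xy+yz+xz)}{xyz}.$$
The crux is then the polynomial identity
$$(x+y+z)(xy+yz+xz)-xyz = (x+y)(x+z)(y+z),$$
which is a standard symmetric-function expansion (both sides equal $x^2(y+z)+y^2(x+z)+z^2(x+y)+2xyz=F(-1)$, using the notation $F(a)$ from the proof of Proposition \ref{prop:ZGV}). With this identity in hand, the displayed fraction simplifies to
$$9\frac{GZ}{ZV}+1 = -\frac{(x+y)(x+z)(y+z)}{xyz}.$$

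Substituting into the right-hand side of the claimed formula gives
$$\frac{4}{9\tfrac{GZ}{ZV}+1} = \frac{-4xyz}{(x+y)(x+z)(y+z)},$$
which matches the value of $SQ/SO$ computed above. The ordinariness hypotheses on $P,P',Z,S$ guarantee that all the denominators involved, namely $xyz$, $(x+y)(x+z)(y+z)=F(-1)$, $F(-9)$, and $F(0)=(x+y+z)(xy+xz+yz)$, are nonzero, so the cited ratios make sense. There is essentially no obstacle here; the one thing to verify carefully is the symmetric identity $(x+y+z)(xy+yz+xz)-xyz=(x+y)(x+z)(y+z)$, after which the corollary reduces to a two-line substitution.
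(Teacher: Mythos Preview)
Your proof is correct and follows essentially the same route as the paper: you invoke Propositions \ref{prop:ZGV} and \ref{prop:SOQ} and reduce everything to the symmetric identity $(x+y+z)(xy+yz+xz)-xyz=(x+y)(x+z)(y+z)$, which is exactly the identity the paper cites (written there as $-(x+y+z)(xy+xz+yz)+xyz=-(x+y)(x+z)(y+z)$). Your version is simply more explicit about why $k=SQ/SO$ and about the nonvanishing of the denominators.
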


\begin{proof}
This follows immediately from Propositions \ref{prop:ZGV} and \ref{prop:SOQ}, using the fact that
$$-(x+y+z)(xy+xz+yz)+xyz=-(x+y)(x+z)(y+z).$$
\end{proof}

\end{section}

\begin{section}{Elliptic curves over $\mathbb{R}$.}

Let the quantity $a$ be defined by
$$a = 9 \frac{GZ}{ZV}.$$
Then Proposition \ref{prop:ZGV} gives that
\begin{equation}
a=-\frac{(x+y+z)(xy+yz+xz)}{xyz}.
\end{equation}
It follows that the set of ordinary points $P$, for which $Z$ and $V$ are ordinary, and $GZ/ZV=a/9$ is fixed, coincides with the set of $P$ whose coordinates satisfy
$$(x+y+z)(xy+yz+xz)+axyz=0,$$
or
\begin{equation}
E_a: \ \ x^2(y+z)+y^2(x+z)+z^2(x+y)+(a+3)xyz=0.
\label{eqn:E}
\end{equation}
The left side of this equation is exactly the quantity $F(a)$ that we defined in (\ref{eqn:F}).  We note that the set of points, for which $Z$ is infinite, is the set of points for which $F(-9)=0$, and the set of points, for which $V$ is infinite, is the set of points for which $F(0)=(x+y+z)(xy+xz+yz)=0$; the latter is the union of the line at infinity $l_\infty$ and the Steiner circumellipse $\iota(l_\infty)$.  Thus, if $a \neq 0, -9$ is a real number, equation (\ref{eqn:E}) describes the set of ordinary points for which $GZ/ZV=a/9$.  Also, the set of $P$ for which $S$ is an infinite point is the set of $P$ for which $F(3)=0$; this set was studied in the paper \cite{mmv}.  Thus, Corollary \ref{cor:conj} holds for all $a \neq 0, 3, -9$. \medskip

The curve $E_a$ turns out to be an elliptic curve, for $a \neq 0, -1, -9$.  To see this, put $z=1-x-y$ in the equation (\ref{eqn:E}).  This gives the affine equation for $E_a$ in terms of absolute barycentric coordinates $(x, y, 1-x-y)$:
\begin{equation}
E_a: \ \ (ax+1)y^2+(ax+1)(x-1)y+x^2-x=0.
\label{eqn:affE}
\end{equation}
We call this curve the {\it geometric normal form} of an elliptic curve.  The discriminant of the equation (\ref{eqn:affE}) with respect to $y$ is
$$D=(ax+1)^2(x-1)^2-4(ax+1)(x^2-x)=(ax+1)(x-1)(ax^2-(a+3)x-1).$$
This polynomial has discriminant $d=256a^2(a+1)^3(a+9)$ and is therefore square-free in $\mathbb{R}[x]$ if and only if $a \neq 0, -1, -9$.  For these values of $a$, the curve $E_a$ is birationally equivalent to $Y^2=D$, where $D$ is quartic in $x$, a curve which is well-known to be an elliptic curve.  Alternatively, we can compute the partial derivatives
\begin{align*}
\frac{\partial F}{\partial x} &= 2x(y+z)+y^2+z^2+(a+3)yz,\\
\frac{\partial F}{\partial y} &= 2y(x+z)+x^2+z^2+(a+3)xz,\\
\frac{\partial F}{\partial z} &= 2z(x+y)+x^2+y^2+(a+3)xy,
\end{align*}
and check that the equations $\frac{\partial F}{\partial x} = \frac{\partial F}{\partial y}  = \frac{\partial F}{\partial z}  = 0$  have no common solution with $(x,y,z) \neq (0,0,0)$, for $a \notin \{0,-1,-9\}$.  For example, subtracting the first two equations gives
$$2z(x-y)+y^2-x^2+(a+3)z(y-x)=(y-x)((a+1)z+x+y)=0.$$
Hence, $x=y$ or $x+y=-(a+1)z$.  Since the above partials arise from each other by cyclically permuting the variables, we also have that $y=z$ or $y+z=-(a+1)x$, and $ z=x$ or $z+x=-(a+1)y$.  Thus, either: 1) $x=y=z$; or 2) $x=y$ and $z=-(a+2)x$, or similar equations hold resulting from a cyclic permutation; or 3) $x+y=-(a+1)z$ along with the two equations arising from cyclic permutations.  In Case 1, $F(a) = (a+9)x^3 = 0$; in Case 2, $F(a)= a(a+1)x^3=0$; and in Case 3, the determinant of the resulting $3 \times 3$ system is $-a^2(a+3)$.  The first two cases are clearly impossible, so we are left with Case 3, with $a=-3$.  In this case $(x+y-2z)-(x-2y+z)=3y-3z=0$, so $x=y=z$ by symmetry and we are in Case 1 again.  Therefore, $F(a)=0$ is a non-singular cubic curve, which implies it is an elliptic curve, since it has a rational point.
\medskip

\noindent {\bf Remark.}The curve $E_a$ always has a torsion group of order $6$ consisting of rational points.  The points $A=(1,0,0), B=(0,1,0), C=(0,0,1)$ are on the curve, as well as the points $A_\infty = (0,1,-1), B_\infty = (1,0,-1), C_\infty = (1,-1, 0)$, which are the infinite points on the lines $BC, CA$, and $AB$, respectively.  We take the base point of the additive group on $E_a$ to be the point $O=A_\infty=(0,1,-1)$.  Putting $x=0$ in (\ref{eqn:E}) gives $yz(y+z)=0$, so the above 6 points are the only points on the sides of $ABC$ which lie on $E_a$.  See \cite{mmv}. \medskip

We summarize the above discussion as follows.

\begin{thm}
The locus of points $P$, for which the point $Z$ is ordinary and $\frac{GZ}{ZV} = \frac{a}{9} \notin \{0, \frac{-1}{9}, -1\}$, for some fixed $a \in \mathbb{R}$, coincides with the set of points on the elliptic curve $E_a$ defined by (\ref{eqn:E}), minus the points in the set
$$T = \{(1, 0, 0), (0, 1, 0), (0, 0, 1), (0, 1, -1), (1, 0, -1), (1, -1, 0)\}.$$
In particular, the locus of $P$ for which the homothety ratio of the map $\textsf{M}=T_P \circ K^{-1} \circ T_{P'}$ is $k = \frac{4}{a+1}$, for fixed real $a \notin \{3,0,-1,-9\}$, is an elliptic curve minus the points on the sides of triangle $ABC$.
\end{thm}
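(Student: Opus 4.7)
The plan is to prove equality of the two loci by double inclusion, and then read off the ``in particular'' statement from Corollary \ref{cor:conj}.

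For the forward direction, suppose $P=(x,y,z)$ is an ordinary point (not on a side of $ABC$ or of $K^{-1}(ABC)$) with $Z$ and $V$ ordinary and $GZ/ZV = a/9$. Formula (\ref{eqn:GZV}) of Proposition \ref{prop:ZGV} rearranges directly to $(x+y+z)(xy+yz+xz) + a\,xyz = 0$, which is $F(a) = 0$; hence $P \in E_a$. None of the six points of $T$ meet the hypotheses (the three vertices have $xyz=0$, and the three infinite points lie on $l_\infty$), so $P \in E_a \setminus T$.

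The reverse direction is where the real content lies, since one must check that every $P \in E_a \setminus T$ satisfies the non-degeneracy hypotheses required to invoke Proposition \ref{prop:ZGV}. My key tool will be the telescoping identity
\[
F(b) - F(a) = (b-a)\,xyz,
\]
immediate from (\ref{eqn:F}); on $E_a$ this gives $F(b) = (b-a)\,xyz$ for every real $b$. Using this I will verify three things. First, $E_a$ meets each side of $ABC$ in exactly the corresponding three points of $T$: on $x=0$ the polynomial $F(a)$ collapses to $yz(y+z)$, yielding $B$, $C$, and $(0,1,-1)$, and cyclically for the other two sides. Second, $E_a$ meets each side of the anticomplementary triangle only in $T$: substituting $y+z=0$ into $F(a)$ reduces it to $-(a+1)xy^2$, which forces $x=0$ or $y=0$ since $a\neq -1$, giving $(0,1,-1)$ and $A$; cyclic symmetry handles the other two sides. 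Third, $E_a$ meets $l_\infty$ only in the three infinite points of $T$: on $x+y+z=0$, the identity $F(a) = (x+y+z)(xy+yz+xz) + a\,xyz$ forces $a\,xyz = 0$, so $xyz=0$ since $a\neq 0$. Once $P\in E_a\setminus T$ is confirmed to avoid all these loci, the telescoping identity at $b=0$ and $b=-9$ gives $F(0) = -a\,xyz \neq 0$ and $F(-9) = -(a+9)\,xyz\neq 0$ (using $a\notin\{0,-9\}$), so $V$ and $Z$ are ordinary; Proposition \ref{prop:ZGV} then delivers $GZ/ZV = a/9$.

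The ``in particular'' statement follows by direct translation via Corollary \ref{cor:conj}: the condition $k = 4/(a+1)$ is equivalent to $9\cdot GZ/ZV = a$, provided the corollary applies, which requires $S$ ordinary; on $E_a$ this means $F(3) = (3-a)\,xyz \neq 0$, forcing the extra exclusion $a\neq 3$. The first part identifies the locus as $E_a\setminus T$, and since $T$ is exactly the intersection of $E_a$ with the (extended) sides of $ABC$, the locus is $E_a$ minus those sides. I expect the only real obstacle to be the bookkeeping of which degenerate value of $a$ corresponds to which excluded geometric configuration, but the telescoping identity $F(b) - F(a) = (b-a)\,xyz$ reduces each such case to a one-line check.
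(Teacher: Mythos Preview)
Your proof is correct and follows essentially the same route as the paper, which assembles the theorem from the discussion preceding it (Proposition~\ref{prop:ZGV} for the ratio, the observations that $F(-9)=0$, $F(0)=0$, $F(3)=0$ govern whether $Z$, $V$, $S$ are ordinary, and the Remark identifying $T$ as $E_a\cap$\,(sides of $ABC$)). Your double-inclusion framing, the telescoping identity $F(b)-F(a)=(b-a)xyz$, and especially your explicit check that $E_a$ meets the sides of the anticomplementary triangle only in $T$ make the argument more complete than the paper's somewhat informal summary, but the underlying ideas are the same.
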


If $a=3$, then $S \in l_\infty$, and $\textsf{M}$ is a translation.  The elliptic curve $E_3$ was considered in \cite{mmv}.  When $a=-5$, $k=-1$ and $\textsf{M}$ is a half-turn.  The elliptic curve $E_{-5}$ was considered in \cite{mms}.  \medskip

We have the following result for the $j$-invariant of $E_a$.

\begin{prop}
\label{prop;j}
For a real number $a \neq 0, -1, -9$, the $j$-invariant of the elliptic curve $E_a$ is
$$j(E_a) = \frac{(a+3)^3(a^3+9a^2+3a+3)^3}{a^2(a+1)^3(a+9)}.$$
\end{prop}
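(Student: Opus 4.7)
The plan is to put $E_a$ into a short Weierstrass model via a rational change of coordinates based on the known rational point $A=(1,0,0)\in E_a$, then compute $j = c_4^3/\Delta$ from the standard Tate formulas. The setup from the paper does most of the work: the affine form (\ref{eqn:affE}) is quadratic in $y$, so completing the square gives the hyperelliptic model
$$Y^2 = D(x) = (ax+1)(x-1)(ax^2 - (a+3)x - 1), \qquad Y = 2(ax+1)y + (ax+1)(x-1).$$

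Next I exploit the rational root $x=1$ of $D$ by the substitution $x = 1 + 1/u$, $Y = W/u^2$, which clears the factor $(x-1)$ and produces a cubic equation
$$W^2 = -4(a+1)u^3 + (a^2 - 6a - 3)u^2 + 2a(a-1)u + a^2.$$
Rescaling $u = v/[-4(a+1)]$ and $W = w/[-4(a+1)]$ normalizes the leading coefficient and gives the Weierstrass model
$$w^2 = v^3 + (a^2 - 6a - 3)v^2 - 8a(a^2 - 1)v + 16 a^2 (a+1)^2,$$
so that the Weierstrass coefficients are $a_1 = a_3 = 0$, $a_2 = a^2-6a-3$, $a_4 = -8a(a^2-1)$, and $a_6 = 16a^2(a+1)^2$.

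From here the computation is mechanical. Using the standard formulas $b_2 = 4a_2$, $b_4 = 2a_4$, $b_6 = 4a_6$, $b_8 = 4a_2 a_6 - a_4^2$, $c_4 = b_2^2 - 24 b_4$, and $\Delta = -b_2^2 b_8 - 8 b_4^3 - 27 b_6^2 + 9 b_2 b_4 b_6$, one obtains
$$c_4 = 16\bigl[(a^2 - 6a - 3)^2 + 24 a(a^2 - 1)\bigr] = 16(a^4 + 12 a^3 + 30 a^2 + 12 a + 9),$$
while collecting terms in $\Delta$ yields the common factor $4096\, a^2(a+1)^3$ together with a residual polynomial factor in $a$ that must be simplified.

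The main obstacle — indeed the only subtle point — is carrying out two clean factorizations. First, one must recognize
$$a^4 + 12 a^3 + 30 a^2 + 12 a + 9 = (a+3)(a^3 + 9 a^2 + 3 a + 3),$$
which is easily guessed by noting that $a=-3$ must make $c_4$ vanish (since the cubic factor of $D(x)$ degenerates there) and then verified directly. Second, one must show that the residual factor of $\Delta$ collapses to $a + 9$. This is checked by a direct expansion, and it is consistent with the already noted discriminant $\mathrm{disc}_x D(x) = 256\, a^2(a+1)^3(a+9)$ of the quartic $D(x)$, since under the birational transformation above one has $\Delta = 16 \cdot \mathrm{disc}_x D(x)$, giving $\Delta = 4096\, a^2(a+1)^3(a+9)$ without further work. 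Combining these factorizations, $c_4^3 = 4096(a+3)^3(a^3 + 9 a^2 + 3a + 3)^3$, and dividing by $\Delta$ produces exactly the asserted formula for $j(E_a)$.
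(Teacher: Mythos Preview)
Your argument is correct and complete: the substitution $x=1+1/u$ does give the cubic you wrote, the rescaling to Weierstrass form is right, and the Tate invariants come out to $c_4=16(a+3)(a^3+9a^2+3a+3)$ and $\Delta=4096\,a^2(a+1)^3(a+9)$, yielding the stated $j$-invariant.

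Your route differs from the paper's.  Both begin with the quartic model $Y^2=D(x)$, but the paper instead applies the fractional linear change $x=(u+1)/(u-a)$, which sends the two linear roots $x=1$ and $x=-1/a$ of $D$ to $u=\infty$ and $u=0$ and gives $Y_1^2=u\bigl(-4u^2+(a^2+6a-3)u+4a\bigr)$.  From there the paper passes to Legendre form with $\lambda=\alpha/\beta$ (the ratio of the roots of that quadratic) and evaluates $j=2^8(\lambda^2-\lambda+1)^3/(\lambda^2-\lambda)^2$ with Maple.  Your approach sends only the root $x=1$ to infinity, lands in a long Weierstrass model, and computes $c_4,\Delta$ by hand.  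The advantage of your method is that it is entirely elementary and rational---no irrational $\lambda$, no computer algebra; the paper's Legendre route is shorter to set up but forces the final simplification through a CAS because $\alpha,\beta$ involve $\sqrt{(a+1)(a+9)}$.

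Two small remarks.  First, your heuristic that ``the cubic factor of $D(x)$ degenerates'' at $a=-3$ is not accurate: nothing in $D$ or in your Weierstrass cubic becomes singular at $a=-3$ (indeed $\Delta\neq 0$ there); the real reason $c_4(-3)=0$ is simply that $j(E_{-3})=0$.  This is harmless since you verify the factorization directly.  Second, the assertion ``$\Delta=16\cdot\mathrm{disc}_x D(x)$ under the birational transformation above'' is not a standard identity you can cite without proof (the constant depends on the normalizations); it is best presented as a consistency check once the direct expansion has been done, which is how you frame it.
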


\begin{proof}
We compute the $j$-invariant of the curve
\begin{equation}
Y^2=(ax+1)(x-1)(ax^2-(a+3)x-1)
\label{eqn:quartic}
\end{equation}
by converting it to a curve in Legendre normal form:
$$E': \ v^2=u(u-1)(u-\lambda),$$
and using the formula
$$j(E') = \frac{2^8(\lambda^2-\lambda+1)^3}{(\lambda^2-\lambda)^2}.$$
Putting $x=\frac{u+1}{u-a}$ and $g(x)=(ax+1)(x-1)(ax^2-(a+3)x-1)$, we have
$$g\left(\frac{u+1}{u-a}\right)=\frac{(a+1)^2}{(u-a)^4} u(-4u^2+(a^2+6a-3)u+4a).$$
Hence, (\ref{eqn:quartic}) is birationally equivalent to
\begin{equation}
Y_1^2=u\left(-4u^2+(a^2+6a-3)u+4a\right).
\label{eqn:Y_1}
\end{equation}
We let $\alpha, \beta$ be the roots of the quadratic in $u$ on the right side of this equation.  Then
$$\alpha, \beta = \frac{a^2+6a-3}{8} \pm \frac{(a+1)}{8} \sqrt{(a+1)(a+9)},$$
and the curve (\ref{eqn:Y_1}) is equivalent over $\mathbb{C}$ to
$$Y_2^2=u(u-\alpha)(u-\beta),$$
which is in turn equivalent over $\mathbb{C}$ to
$$E': \ v^2=u(u-1)\left(u-\frac{\alpha}{\beta}\right).$$
A calculation on Maple with $\lambda=\alpha/\beta$ gives that
$$j(E')=\frac{(a+3)^3(a^3+9a^2+3a+3)^3}{a^2(a+1)^3(a+9)}.$$
This proves the proposition.
\end{proof}

We now prove the following result.

\begin{thm}
Let $E$ be any elliptic curve whose $j$-invariant is a real number.  Then $E$ is isomorphic to the curve $E_a$ over $\mathbb{R}$ for some real value of $a \notin \{0, -1, -9\}$.
\end{thm}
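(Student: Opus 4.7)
The plan is to invert the $j$-invariant formula of Proposition \ref{prop;j} over $\mathbb{R}$. Given any elliptic curve $E$ with real $j$-invariant $j_0 = j(E)$, the strategy is to produce a real $a \notin \{0,-1,-9\}$ with $j(E_a) = j_0$, and then to reconcile real twist classes.

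The first task is to show that the rational function
$$j(a) = \frac{(a+3)^3(a^3+9a^2+3a+3)^3}{a^2(a+1)^3(a+9)}$$
surjects from $\mathbb{R} \setminus \{0,-1,-9\}$ onto $\mathbb{R}$. The key feature is the odd-order pole of multiplicity $3$ at $a=-1$: expanding near that point gives $j(a) \sim 512/(a+1)^3$, so $j(a)\to +\infty$ as $a\to -1^+$ and $j(a)\to -\infty$ as $a\to -1^-$. Computing the one-sided limits at the remaining poles $a=0,\,-9$ and at $a=\pm\infty$ pins down the image of $j$ on each of the four connected intervals of $\mathbb{R}\setminus\{0,-1,-9\}$; their union covers all of $\mathbb{R}$ by the intermediate value theorem.

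If one requires an isomorphism over $\mathbb{R}$ (and not merely over $\mathbb{C}$), real twists must also be matched. For $j_0 \neq 0,1728$, two real elliptic curves with the same $j$-invariant are $\mathbb{R}$-isomorphic if and only if they have the same number of connected real components. For $E_a$, this is controlled by the discriminant of the quartic model $Y^2=(ax+1)(x-1)(ax^2-(a+3)x-1)$, namely $d = 256\,a^2(a+1)^3(a+9)$, which is negative precisely when $a\in(-9,-1)$ (one component) and positive otherwise (two components). The plan is then to verify that the image of $j$ restricted to $(-9,-1)$ is $(-\infty,1728]$, matching the $j$-range of connected real elliptic curves, while the image on the complement $\mathbb{R} \setminus ([-9,-1]\cup\{0\})$ is all of $\mathbb{R}$, matching the two-component twists.

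The main obstacle is this final range-matching, in particular confirming $\max_{a\in(-9,-1)} j(a) = 1728$; this requires locating critical points of the degree-$12$ rational function $j(a)$ and checking the extremal values. The extremum should correspond to an $a$ for which $E_a$ has $j$-invariant $1728$ and extra automorphisms. The special cases $j_0 \in \{0,1728\}$, where the automorphism group of $E$ exceeds $\{\pm 1\}$ and the twist classification is more delicate, must be treated individually by exhibiting specific values of $a$ realizing each real isomorphism class.
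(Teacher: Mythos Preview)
Your surjectivity argument for $j:\mathbb{R}\setminus\{0,-1,-9\}\to\mathbb{R}$ via the cubic pole at $a=-1$ and the intermediate value theorem is correct and is essentially the paper's own approach (the paper phrases it as a calculus exercise, recording that the extremal value $1728$ is attained on the intervals $(-\infty,-9)$ and $(-9,-1)$). You also rightly flag that equality of $j$-invariants does not by itself force an isomorphism over $\mathbb{R}$; the paper's proof simply asserts this final step without justification.

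However, your proposed remedy for the twist issue rests on a false criterion. It is not true that, for $j_0\neq 0,1728$, real elliptic curves with the same $j$-invariant are $\mathbb{R}$-isomorphic precisely when they have the same number of real components. From $j-1728=c_6^{\,2}/\Delta$ one sees that the sign of $\Delta$, and hence the component count, is already determined by the sign of $j_0-1728$; moreover the quadratic twist by $-1$ multiplies $\Delta$ by $(-1)^6=1$, so a curve and its nontrivial real twist always have the \emph{same} number of components. Concretely, $y^2=x^3+x+1$ and its twist $y^2=x^3+x-1$ share the same $j$-invariant and both have $\Delta=-496<0$ (one component), yet they are not isomorphic over $\mathbb{R}$. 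Consequently your discriminant-sign bookkeeping on $(-9,-1)$ versus its complement cannot separate the two real forms at a given $j_0$, and the range-matching program you outline does not close the gap. A correct invariant separating the two real twists for $j_0\neq 0,1728$ is the sign of $c_6$ (equivalently, of $B$ in a short model $y^2=x^3+Ax+B$), which \emph{does} flip under twisting by $-1$; to complete the argument one would have to show that among the real solutions of $j(E_a)=j_0$ both signs of $c_6(E_a)$ are realized.
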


\begin{proof}
Letting $f(x)$ represent the function
$$f(x)=\frac{(x+3)^3(x^3+9x^2+3x+3)^3}{x^2(x+1)^3(x+9)},$$
we just have to check that $f(\mathbb{R}-\{0,-1,-9\})=\mathbb{R}$.  This is a straightforward calculus exercise, which we leave to the reader.  We only note that
$$f'(x) = \frac{6(x+3)^2(x^3+9x^2+3x+3)^2(x^2+6x-3)(x^4+12x^3+30x^2+36x+9)}{x^3(x+1)^4(x+9)^2};$$
that the minimum value of $f(x)$ for $x<-9$ is $1728$; the maximum value of $f(x)$ for $-9<x<-1$ is also $1728$; and that $f(x)$ approaches $-\infty$ as $x$ approaches the asymptotes $x=-9$ (from the right) and $x=-1$ (from the left).  This is enough to prove the assertion.  If $a \in \mathbb{R}$ satisfies $j(E_a) = f(a) = j(E)$, then $E \cong E_a$ over $\mathbb{R}$.
\end{proof}

\noindent {\bf Remark.} The real values of $a$ for which $j(E_a)=1728$ are the real roots of the equation
$$(x^2+6x-3)(x^4+12x^3+30x^2+36x+9)=0,$$
and are given explicitly by
$$a=-3 \pm 2\sqrt{3}, \ -3-\sqrt{3} \pm \sqrt{9+6\sqrt{3}}.$$
\medskip

As an example, the curve $E_{-3}$ has $j(E_{-3})=0$, and affine equation
$$E_{-3}: \ \ (3x-1)y^2+(3x-1)(x-1)y-x^2+x=0.$$
Putting $x=\frac{u}{u-2}$ and $y=\frac{-(3x-1)(x-1)+4v/(u-2)^2}{2(3x-1)}$ yields the isomorphic curve
$$E: \ \ v^2=u^3+1.$$
This curve has exactly $6$ rational points, namely, the points $(2,\pm3),(-1,0),(0,\pm1)$, and the base point $O$.  For which real quadratic fields $K=\mathbb{Q}(\sqrt{d})$ is there a point on $E$ defined over $K$?  For which values of $n \ge 2$ does $E$ have a {\it real} torsion point of order $n$?

\bigskip

\end{section}

\noindent Dept. of Mathematics, Maloney Hall\\
Boston College\\
140 Commonwealth Ave., Chestnut Hill, Massachusetts, 02467-3806\\
{\it e-mail}: igor.minevich@bc.edu
\bigskip

\noindent Dept. of Mathematical Sciences\\
Indiana University - Purdue University at Indianapolis (IUPUI)\\
402 N. Blackford St., Indianapolis, Indiana, 46202\\
{\it e-mail}: pmorton@iupui.edu

\end{document}